\theoremstyle{plain}
\newtheorem{thm}{Theorem}[section]
\newtheorem{conjecture}[thm]{Conjecture}
\newtheorem{prop}[thm]{Proposition}
\newtheorem{lemma}[thm]{Lemma}
\theoremstyle{definition}
\newtheorem{question}[thm]{Question}
\newtheorem{example}[thm]{Example}
\newtheorem{rem}[thm]{Remark}
\newtheorem{defin}[thm]{Definition}
\newcommand{\M}{\mathcal{M}}
\newcommand{\N}{\mathcal{N}}
\newcommand{\T}{\mathcal{T}}
\newcommand{\V}{\mathcal{V}}
\newcommand{\QQ}{\mathbb{Q}}
\newcommand{\CC}{\mathbb{C}}
\newcommand{\OO}{\mathcal{O}}
\newcommand{\PP}{\mathbb{P}}
\newcommand{\ZZ}{\mathbb{Z}}
\newcommand{\divv}{\operatorname{div}}
\newcommand{\rk}{\operatorname{rk}}
\newcommand{\Cl}{\operatorname{Cl}}
\newcommand{\D}{\mathcal{D}}
\newcommand{\Pic}{\operatorname{Pic}}
\newcommand{\Hom}{\operatorname{Hom}}
\newcommand{\cone}{\operatorname{cone}}
\newcommand{\ord}{\operatorname{ord}}
\newcommand{\NS}{\operatorname{NS}}
\begin{document}

\title{The Generalised Mukai Conjecture for Spherical Varieties}

\author[G.~Gagliardi]{Giuliano Gagliardi}
\address[G.~Gagliardi]{d-fine AG\\Brandschenkestrasse 150\\8002 Zurich\\Switzerland}
\email{}
\author[J.~Hofscheier]{Johannes Hofscheier}
\author[H.~Pearson]{Heath Pearson}
\address[J.~Hofscheier and H.~Pearson]{School of Mathematical Sciences\\University of Nottingham\\ Nottingham\\NG7 2RD\\UK}
\email{\{johannes.hofscheier, heath.pearson\}@nottingham.ac.uk}

\subjclass{Primary: 14M27; Secondary: 14J45, 14M25, 14E30}
\keywords{Complexity, Fano variety, Mukai conjecture, Spherical variety, Toric variety}

\begin{abstract}
We prove the generalised Mukai conjecture for \(\QQ\)-factorial spherical Fano varieties. In this case, a stronger inequality holds featuring an extra term—the minimum absolute complexity of a log Calabi-Yau pair—which measures how close the Fano variety is to being toric.
\end{abstract}

\maketitle{}

\section{Introduction}\noindent\phantomsection\label{sec:intro} In this paper we work over the field of complex numbers.

A classical question in algebraic geometry is the characterisation of projective space \(\PP^n\) among smooth Fano varieties.
To this end, many criteria are known, see for instance~\cite{KO73,Mo79,CMSB02}.
In 1988, Mukai~\cite[Conjecture~4]{Mukai88} extended this question by suggesting a characterisation of powers of projective space \({(\PP^n)}^r\) among smooth Fano varieties.
Further extending Mukai's work, Bonavero, Casagrande, Debarre, and Druel proposed the following conjecture in 2002:
\begin{conjecture}[{\cite[Conjecture]{BCDD}}, The generalised Mukai conjecture]\noindent\phantomsection\label{Mukai conjecture}
	Let \(X\) be a smooth Fano variety, then
	\[
		(\iota_X-1) \rho_X \leq \dim X,
	\]
	with equality if and only if \(X\cong{\left(\PP^{\iota_X-1}\right)}^{\rho_X}\).
\end{conjecture}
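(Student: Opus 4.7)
The stated inequality is the Bonavero--Casagrande--Debarre--Druel conjecture, which to my knowledge remains open in full generality; its truth has been established in several subclasses (for instance toric varieties by Casagrande, homogeneous and horospherical varieties, and --- by the present paper --- all spherical varieties). I outline the standard Mori-theoretic attack one would mount on a general smooth Fano \(X\), flagging where it breaks down.

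The plan is to exploit a dominating unsplit family \(\mathcal{H}\) of minimal rational curves of anticanonical degree \(\iota_X\), whose existence is guaranteed by Mori's bend-and-break. Chain-rational-connectedness with respect to \(\mathcal{H}\) lets one estimate \(\rho_X\) by counting deformation parameters of chains of curves in the family; Wi\'sniewski-type inequalities already yield bounds close to, but one short of, the conjectural one.

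To close the gap I would induct on \(\rho_X\) via an extremal Mori contraction \(\pi\colon X \to Y\) associated to an extremal ray of minimal length. If one can show that \(\pi\) is a smooth \(\PP^r\)-bundle with \(r \geq \iota_X - 1\), then \(Y\) is again smooth Fano with \(\rho_Y = \rho_X - 1\) and \(\iota_Y \geq \iota_X\); the inductive hypothesis applied to \(Y\) together with additivity of dimensions along the bundle projection delivers the required inequality. The equality case forces \(r = \iota_X - 1\), and iterating over \(\rho_X\) commuting extremal contractions reconstructs \(X \cong {(\PP^{\iota_X - 1})}^{\rho_X}\) via a Casagrande-type rigidity result for fibrations by projective spaces.

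The hard part --- and the reason the conjecture is presently unresolved in general --- is that one has no a priori control over arbitrary extremal contractions: \(\pi\) need not be of fibre type, need not be equidimensional, \(Y\) need not be smooth or of the expected Picard number, and the pseudo-index can drop discontinuously through birational or singular-fibre contractions. In the spherical setting the combinatorial structure of colours and \(G\)-orbits replaces this missing geometric control and reduces the problem to a linear-programming question on the weight lattice, which is the route taken by the present paper; no comparable reduction is known for arbitrary smooth Fano varieties, and so my outline above should be regarded as a programme rather than a complete proof of the conjecture as worded.
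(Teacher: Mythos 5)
You are right not to claim a proof here: the statement you were given is \Cref{Mukai conjecture} itself, which the paper records only as a conjecture of Bonavero--Casagrande--Debarre--Druel and does not prove in general; there is therefore no proof of this statement in the paper to compare yours against. Your Mori-theoretic outline (unsplit dominating families, Wi\'sniewski-type bounds, induction via extremal contractions hoped to be \(\PP^r\)-bundles) is an accurate description of the standard programme and of exactly where it breaks down --- lack of control over the type, equidimensionality and smoothness of an arbitrary extremal contraction and over the behaviour of the pseudo-index under it --- so as an assessment of the status of the general conjecture your answer is correct.

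What the paper actually proves is the spherical case, \Cref{thm:Generalised Mukai conjecture}, and it does so by a route quite different from the one you sketch: it bounds \((\iota_X-1)\rho_X\) above by \(\dim X - \tilde{\wp}(X)\) (\Cref{P function conjecture intro}), interprets the combinatorial quantity \(\tilde{\wp}(X)\) as an upper bound for the minimal absolute complexity \(\gamma_{\min}(X)\) of log canonical pairs \((X,D)\) with \(D\sim_\QQ -K_X\) (by producing such a pair from a \(B\)-invariant divisor read off the polytope \(Q^*\cap\T\), \Cref{gorenstein lc pair}), and then invokes the characterisation of toric varieties by complexity (\Cref{complexity zero means toric}) to force \(\tilde{\wp}(X)\ge 0\) and, in the equality case, toricity; the identification \(X\cong(\PP^{\iota_X-1})^{\rho_X}\) is then finished by Casagrande's toric theorem, which is the one point of genuine contact with your outline. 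So the combinatorial/complexity reduction plays precisely the role of the ``missing geometric control'' you identify, but it is special to the spherical setting, consistent with your conclusion that no proof of the general statement is available.
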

Here, \(\rho_X\coloneqq \rk(\NS(X))\) denotes the \emph{Picard number}, which is the rank of the \emph{N\'eron-Severi group \(\NS(X)\)} of \(X\), i.e.\ the group of Weil divisors on \(X\) modulo algebraic equivalence. And
\[
    \iota_X\coloneqq \min\{-K_X\cdot C\,|\, C\text{ rational irreducible curve in }X\}
\]
denotes the \emph{pseudo-index} of \(X\).

\Cref{Mukai conjecture} has been verified in various special cases, but remains open in general.
The known cases include: for dimensions \(\le5\)~\cite{BCDD, ACO04}; for Picard number one~\cite{Mukai88}; for pseudo-index \( \iota_X \ge \frac{\dim X + 3}3\)~\cite{NO10}; for toric varieties~\cite{C06}; for horospherical varieties~\cite{P10}; and for symmetric varieties~\cite{GH15}.

In this paper, we prove the \emph{generalised Mukai conjecture for spherical varieties}.
Recall that a \emph{spherical variety} is an irreducible normal variety \(X\), equipped with the action of a connected reductive group \(G\), such that \(X\) has an open \(B\)-orbit for a Borel subgroup \(B \subseteq G\).
In this setting, we show that a slightly stronger version of the generalised Mukai conjecture holds, \Cref{thm:Generalised Mukai conjecture}, which allows mild singularities. Recall that a projective variety is a \emph{Fano variety} if its anticanonical divisor class is ample.
Furthermore, a variety is called \emph{\(\mathbb{Q}\)-factorial} if a multiple of each Weil divisor is Cartier.
The aim of this paper is to prove the following theorem.
\begin{thm}[The spherical generalised  Mukai conjecture]\noindent\phantomsection\label{thm:Generalised Mukai conjecture}
	Let \(X\) be a \(\QQ\)-factorial spherical Fano variety, then
    \[
        (\iota_X-1)\rho_X \leq \dim X,
    \]
    with equality if and only if \(X\cong{(\PP^{\iota_X-1})}^{\rho_X}\).
\end{thm}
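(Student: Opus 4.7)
The plan is to deduce \Cref{thm:Generalised Mukai conjecture} directly from the main combinatorial inequality $\tilde{\wp}(X)\ge 0$ announced in the abstract, combined with Casagrande's proof of the generalised Mukai conjecture in the toric case~\cite{C06}.

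The first, and most technical, step is to translate the quantities $\rho_X$, $\dim X$ and $\iota_X$ into Luna--Vust combinatorial data. For a complete spherical variety, $\rho_X$ and $\dim X$ are read off directly from the colored fan together with the spherical datum of the open $G$-orbit. For a $\QQ$-factorial Gorenstein spherical Fano variety, $-K_X$ is Cartier, and its intersection number with any $B$-invariant rational curve can be computed from the colored fan via Brion's formulae. Letting $\iota_X^{\mathrm{comb}}$ denote the minimum of $-K_X\cdot C$ over $B$-invariant irreducible rational curves, one has $\iota_X\le\iota_X^{\mathrm{comb}}$, and the key identification to establish is that
\[
	\dim X - (\iota_X^{\mathrm{comb}}-1)\rho_X
\]
coincides, up to a positive rescaling, with the combinatorial invariant $\tilde{\wp}(X)$ (or is at worst bounded below by a positive multiple of it).

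Given this translation, the inequality $(\iota_X-1)\rho_X\le\dim X$ is immediate from $\tilde{\wp}(X)\ge 0$. For the equality case $(\iota_X-1)\rho_X = \dim X$, the identification forces $\tilde{\wp}(X)$ to lie in $[0,1)$, and the main theorem then yields that $X$ is toric. Casagrande's toric Mukai theorem~\cite{C06} applies at that point to give the desired isomorphism $X\cong{(\PP^{\iota_X-1})}^{\rho_X}$, completing the proof.

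The main obstacle, as so often in spherical generalisations of toric results, is the first step: producing an exact formula or sharp upper bound for $\iota_X$ on a spherical Fano variety that matches the invariant $\tilde{\wp}$. In the toric setting, this reduces to Casagrande's observation that the pseudo-index is realised by a primitive relation. The spherical analogue requires careful bookkeeping of colors, the valuation cone, and the interaction of the moment polytope with the spherical roots; it is also where the $\QQ$-factorial Gorenstein hypotheses play their essential role, guaranteeing a tractable intersection theory on the combinatorially accessible curves used to bound $\iota_X$.
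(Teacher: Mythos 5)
Your overall strategy is the same as the paper's: combine the new combinatorial bound $\tilde{\wp}(X)\ge 0$ (and the characterisation $\tilde{\wp}(X)<1\iff X$ toric) with Casagrande's toric Mukai theorem~\cite{C06} to handle the equality case. That part is sound.

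However, there is a genuine gap in the step you single out as ``the main obstacle,'' namely producing a relationship between $(\iota_X-1)\rho_X$ and $\tilde{\wp}(X)$. You formulate this as a ``key identification to establish'' of the shape $\dim X-(\iota_X^{\mathrm{comb}}-1)\rho_X \approx c\cdot\tilde{\wp}(X)$ for some positive constant $c$, and then leave it unproved, remarking that it ``requires careful bookkeeping.'' This step is not open: it is precisely the content of \cite[Corollary~4.4]{GH15}, which the paper quotes as \Cref{P function conjecture intro} and which states the exact inequality
\[
	(\iota_X-1)\rho_X \le \dim X-\tilde{\wp}(X)
\]
for $\QQ$-factorial Gorenstein spherical Fano varieties. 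There is no rescaling constant $c$, no auxiliary quantity $\iota_X^{\mathrm{comb}}$, and no ``identification''; it is a one-sided inequality involving $\iota_X$ itself, already established in the literature. Once you cite that result, the chain $(\iota_X-1)\rho_X\le\dim X-\tilde{\wp}(X)\le\dim X$ and the equality analysis (equality forces $\tilde{\wp}(X)\le 0$, hence $\tilde{\wp}(X)=0<1$, hence $X$ toric, hence Casagrande applies) is exactly the paper's argument in \Cref{sec:proof-of-sph-mukai}. As written, your proposal defers the entire weight of the proof onto a step you acknowledge you have not carried out, so it does not constitute a complete proof; replacing the speculative ``identification'' with the citation of \cite[Corollary~4.4]{GH15} would close the gap and bring it in line with the paper.
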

As mentioned above, \Cref{thm:Generalised Mukai conjecture} has previously been shown for \(\QQ\)-factorial Gorenstein Fano toric varieties~\cite{C06}, horospherical varieties~\cite{P10}, and symmetric varieties~\cite{GH15}.
The latter work is particularly relevant for this paper, as here, a combinatorial strategy is proposed to prove \Cref{thm:Generalised Mukai conjecture} for the whole class of spherical varieties --- which includes toric, horospherical, and symmetric varieties.
In particular, \cite[Definition~1.3]{GH15} introduces a combinatorial function \(\widetilde{\wp}\) which associates to an arbitrary spherical variety \(X\), a number \(\widetilde{\wp}(X) \in \QQ \cup \{ \infty \}\) (see \Cref{def: P}).
When \(X\) is a \(\QQ\)-factorial spherical Fano variety, this provides an upper bound for the product \((\iota_X-1)\rho_X\).
\begin{thm}[{\cite[Corollary~4.4]{GH15}}]\noindent\phantomsection\label{P function conjecture intro}
    Let \(X\) be a \(\QQ\)-factorial spherical Fano variety, then
    \[
        (\iota_X-1)\rho_X \le \dim X-\widetilde{\wp}(X).
    \]
\end{thm}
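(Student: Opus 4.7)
The plan is to exploit the Luna--Vust combinatorial description of a spherical variety and to recast the inequality as a linear programming statement on its colored fan. Throughout, the pseudo-index $\iota_X$ is controlled by the simple observation $\iota_X\le -K_X\cdot C$ for every complete rational curve $C\subset X$; in the spherical setting the Brion formula expresses $-K_X$ as an explicit $\QQ$-linear combination of $B$-stable prime divisors, and for the natural family of $B$-invariant curves arising from the fan, the intersection numbers with $-K_X$ can be read off purely combinatorially.

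I would first fix the Luna--Vust data of $X$: the weight lattice $M$, its dual $N$, the colored fan $\Sigma$, and the set of colors $\D$. Under the $\QQ$-factorial hypothesis the colored cones of $\Sigma$ are simplicial in the colored sense, and under the Gorenstein hypothesis the piecewise-linear function representing $-K_X$ takes integer values on the primitive generators of rays and on the colors. The Picard number $\rho_X$ then equals the corank of the natural map from $M$ to $\ZZ^{\mathcal{B}(X)}$, where $\mathcal{B}(X)$ is the set of $B$-stable prime divisors, so both $\rho_X$ and the coefficients of $-K_X$ are combinatorial invariants of the spherical datum.

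The main step is to set up the linear program. To each wall $w$ of $\Sigma$ (including ``colored walls'' involving an element of $\D$), Luna--Vust theory assigns a complete rational curve $C_w$, and the bound $\iota_X\le-K_X\cdot C_w$ yields a linear inequality in the coefficients of $-K_X$. Simpliciality coming from $\QQ$-factoriality guarantees that these wall curves span $N_1(X)$. The function $\tilde{\wp}(X)$ of \Cref{def: P} is then precisely the optimal value of the dual LP, measuring how far the combinatorial data are from forcing equality in a ``volume identity'' of the form $\dim X=\sum_D a_D$, where the $a_D$ are the coefficients of $-K_X$ in the Brion formula. Strong duality delivers $(\iota_X-1)\rho_X\le \dim X-\tilde{\wp}(X)$.

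The main obstacle I anticipate is the book-keeping needed to handle colors and spherical roots of type $2\alpha$, whose contributions to $-K_X$ come with half-integer coefficients that must be absorbed carefully into the LP. In the toric and horospherical cases these problematic spherical roots are absent and the proof reduces to a toric-flavoured argument on the fan; in the general spherical case, the LP must be engineered so that its dual variables have the correct ``colored'' interpretation. I expect this is where the Gorenstein hypothesis becomes indispensable, ensuring that all relevant quantities in the LP remain integral and that the wall-curve inequalities can be applied term-by-term.
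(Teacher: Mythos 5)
You are attempting to reprove a result that the paper itself does not prove but imports verbatim as \cite[Corollary~4.4]{GH15}; there is therefore no in-paper proof to compare against. Evaluated on its own merits, your sketch is not yet a proof: the central step is asserted rather than established, and one of its ingredients is simply false.

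The most visible problem is the claimed ``volume identity'' \(\dim X = \sum_D a_D\), where \(a_D\) are the coefficients of \(-K_X\) in Brion's formula. No such identity exists: already for \(X = \PP^n\) with the trivial spherical structure one has \(-K_X = (n+1)H\), so \(\sum_D a_D = n+1 \neq n = \dim X\). Since you describe \(\tilde{\wp}(X)\) as ``measuring how far the combinatorial data are from forcing equality'' in this identity, the entire interpretation of \(\tilde{\wp}(X)\) as the optimal value of a dual LP is resting on a false premise and is never actually verified against \Cref{def: P}. The actual content of \Cref{def: P} is a maximisation over \(Q^*\cap\T\); you would need to exhibit a concrete primal LP built from the wall-curve inequalities \(\iota_X \le -K_X\cdot C_w\), write down its dual, and prove that this dual coincides with the LP appearing in \Cref{def: P} (after the rewriting in \Cref{P function meaning}). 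None of that is done, so ``strong duality delivers the result'' is a conclusion with no argument attached.

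There is a second gap: even granting an LP whose dual is \(\tilde{\wp}(X)\), you still need a chain of equalities/inequalities passing through \(\rho_X\). Concretely, unwinding \Cref{def: P} and the identity \(\rho_X = \#\Delta - \rk X\), the inequality to prove is
\[
  \iota_X\,\rho_X \;\le\; \sum_{D\in\Delta}\bigl(m_D + \langle\rho(D),\vartheta\rangle\bigr)
  \quad\text{for some } \vartheta\in Q^*\cap\T.
\]
Each term in the sum is nonnegative for \(\vartheta\in Q^*\), but you have not explained how the pseudo-index enters: the wall-curve bound \(\iota_X \le -K_X\cdot C_w\) must be converted into a lower bound on (some subset of) the terms \(m_D + \langle\rho(D),\vartheta\rangle\), and the correct \(\vartheta\) must be constructed. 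That conversion is precisely the technical heart of the argument and is where the \(\QQ\)-factoriality (to pin down the structure of wall curves and the Picard rank) and the Gorenstein hypothesis (to place the relevant vertex of \(Q^*\) in the lattice) intervene. Your proposal correctly flags these hypotheses as important, and the general strategy of exploiting \(B\)-invariant wall curves in the colored fan is reasonable and consistent with the tools available, but without the explicit LP and the explicit choice of \(\vartheta\) there is no proof.
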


\begin{rem} In \cite{GH15}, it is assumed that \(X\) is \(\QQ\)-factorial and Gorenstein, that is \(-K_X\) is a Cartier divisor. This assumption was introduced to make use of Casagrande's proof of the Mukai conjecture for \(\QQ\)-factorial Gorenstein Fano toric varieties \cite{C06}. However, the arguments of \cite[Section~4]{GH15} do not make use of the Gorenstein assumption, which allows us to state \Cref{P function conjecture intro} in this generality.
\end{rem}

The core idea of this paper is to interpret \(\widetilde{\wp}(X)\) in terms of the absolute complexity of log canonical pairs \((X,D)\), and to use this to derive an upper bound for \(\widetilde{\wp}(X)\), and therefore for \((\iota_X-1)\rho_X\).
Recall that if \(X\) is complete, then a pair \((X,D)\) is called a \emph{log pair} if \(D\) is an effective \(\QQ\)-divisor such that \(K_X+D\) is \(\QQ\)-Cartier.
It is well-known that for each log pair \((X,D)\) there exists a \emph{log resolution} \(\pi \colon X' \to X\), i.e.\ \(\pi\) is a proper birational morphism such that \(X'\) is smooth, the exceptional locus of \(\pi\) is a divisor, and the union of the exceptional locus and the strict transform of the support of \(D\) is a simple normal crossings divisor (see \Cref{sec:constructing-lc-pair}).
A log pair \((X,D)\) is said to be \emph{log canonical} if for any log resolution of singularities \(\pi \colon X' \to X\), in the equation \(K_{X'}=\pi^*(K_X+D)-\pi_*^{-1}(D)+\sum_E a_E E\), each of the coefficients \(a_E\) satisfy \(a_E \ge -1\) (here \(E\subseteq X'\) runs through the components of the exceptional locus of \(\pi\)).

Following~\cite[Definition~1.7]{Geomcharoftoric}, the \emph{absolute complexity} of the log pair \((X,D)\) is \(\gamma(X,D) \coloneqq \dim X + \rho_X - d(D)\), where \(d(D)\) denotes the sum of the coefficients of the components of the Weil divisor \(D\), i.e.\ \(d(\sum_i a_i D_i)=\sum_i a_i\), where the \(D_i\) are prime divisors.
This quantity appears in the following theorem, which characterises toric varieties using the absolute complexity of a log canonical pair.

\begin{thm}[{\cite[Corollary~of~Theorem~1.2]{Geomcharoftoric}}]\noindent\phantomsection\label{complexity zero means toric}
    Let \(X\) be a complete variety, and let \((X,D)\) be a log canonical pair such that \(-(K_X+D)\) is nef.
    If \(\gamma\coloneqq \gamma(X,D) < 1\), then \(\gamma \geq 0\) and there is a divisor \(D'\) such that \((X,D')\) is a toric pair.
    Furthermore, \(D'\ge\lfloor D \rfloor\) and \(D'\) shares all but at most \(\lfloor 2\gamma\rfloor\) components with \(D\). 

\end{thm}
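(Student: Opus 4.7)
The strategy is to deduce the toric structure of $X$ from the low complexity $\gamma < 1$ by constructing an effective torus action of maximal dimension $\dim X$, after which $X$ is toric by the standard theory of toric embeddings. First I would reduce to the situation where $X$ is projective and $\QQ$-factorial via a small $\QQ$-factorialisation; the hypotheses of log canonicity, nefness of $-(K_X+D)$, and the numerical invariants entering $\gamma$ are all preserved under this reduction, possibly after replacing $D$ by its strict transform.

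For the bound $\gamma \ge 0$, the aim is to show that for any log canonical pair $(X,D)$ with $-(K_X+D)$ nef, the sum of boundary coefficients satisfies $d(D) \le \dim X + \rho_X$. The classes of the components $D_i$ span a subspace of $\NS(X)_\QQ$ of dimension at most $\rho_X$, and along each independent direction adjunction together with log canonicity bounds the total admissible weight by a quantity of order $\dim X$. Combining these two estimates carefully yields the required inequality.

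For the main conclusion I would proceed by induction on $\dim X$, with the curve case being an immediate check on $\PP^1$. Since $\gamma<1$ forces many components of $D$ to carry coefficients close to $1$, I would select a component $D_0$ of coefficient $1$ and apply adjunction to obtain a log canonical pair $(D_0,\Theta_0)$ of complexity at most $\gamma$; by the inductive hypothesis $D_0$ is toric. The next step is to globalise: run a $(K_X+D-\varepsilon D_0)$-MMP to extract a Mori contraction $f\colon X\dashrightarrow Y$, control the complexity of both fibres and base, and assemble the fibrewise torus actions into a single action on $X$ via equivariant deformation theory. Once the $\dim X$-dimensional torus action is in place, the toric boundary $D'$ is the union of $T$-invariant prime divisors, and the identity $d(D') = \dim X + \rho_X$ together with $d(D) \ge \dim X + \rho_X - \gamma$ gives the claimed bound $\lfloor 2\gamma \rfloor$ on the number of components where $D$ and $D'$ disagree, after noting that any shared component of coefficient strictly less than $1$ contributes at least $\tfrac{1}{2}$ to the ``defect'' $d(D') - d(D)$ on each side.

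The principal obstacle is the globalisation step: extending inductively constructed torus actions on divisors or fibres to a global action on $X$. Fibrewise toric structures need not glue automatically into a global one, and it is precisely here that the strict inequality $\gamma<1$ must be leveraged, since equality already fails to be rigid (think of an elliptic curve over a toric base). I would expect to invoke Birkar's boundedness of complements, or a Brion--Luna style equivariant deformation argument, to trivialise the resulting family of tori and produce the required global action of $T \cong (\CC^*)^{\dim X}$.
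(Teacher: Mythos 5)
The paper does not supply a proof of this result: it is stated as \cite[Corollary~of~Theorem~1.2]{Geomcharoftoric}, i.e.\ quoted from Brown--McKernan--Svaldi--Zong's geometric characterisation of toric varieties, and used thereafter as a black box. There is therefore no in-paper argument to compare your sketch against; the relevant comparison is with the proof in the cited source, and within the context of this paper the correct move is simply to cite the result.

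Against that external benchmark, your sketch has the right overall flavour (reduction, adjunction, induction on dimension, MMP) but several of the steps would fail as written. First, a small $\QQ$-factorialisation does \emph{not} preserve the quantities entering $\gamma$: it is an isomorphism in codimension one, so $\dim X$ and $d(D)$ are unchanged, but $\rho_X$ can strictly increase, so $\gamma$ can jump above $1$ and the hypothesis is lost. (The cited authors work with a finer ``complexity'' built from the rank of the span of the boundary components rather than the full Picard number, precisely to control this; the $\rho_X$-version is only a corollary.) Second, your argument for $\gamma \ge 0$ is not a proof --- the claim that ``adjunction together with log canonicity bounds the total admissible weight by a quantity of order $\dim X$ along each independent direction'' \emph{is} the content of the theorem and cannot be dispatched in a sentence. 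Third, and most seriously, you correctly identify the globalisation step (assembling a torus action on $X$ from actions on lower-dimensional strata or fibres) as the obstacle and leave it unresolved; neither Birkar's boundedness of complements nor a Brion--Luna equivariant deformation argument is what the cited source uses, nor is it clear either would apply. Finally, the derivation of the $\lfloor 2\gamma\rfloor$ bound from ``each shared component of coefficient $<1$ contributes at least $\tfrac12$ to the defect'' is incorrect: such a component could have coefficient arbitrarily close to $1$ and contribute almost nothing.
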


Recall that two \(\mathbb{Q}\)-divisors \(D_1\) and \(D_2\) in a normal variety \(X\) are said to be \emph{\(\mathbb{Q}\)-linearly equivalent}, denoted by \(D_1 \sim_{\mathbb{Q}} D_2\), if there exists an integer \(r\) such that \(rD_1\) and \(rD_2\) are \(\ZZ\)-divisors and linearly equivalent in the usual sense. 

A \emph{log Calabi-Yau pair} is a log canonical pair \((X,D)\) such that \(D\sim_\QQ -K_X\). Motivated by the previous theorem, for a complete variety \(X\) we introduce
\[
    \gamma(X) \coloneqq \inf\{\gamma(X,D) \mid (X,D)\text{ log Calabi-Yau pair}\}\ge0,
\]
which features in the following lower bound. Recall that a normal variety \(X\) is called \emph{\(\mathbb{Q}\)-Gorenstein} if a multiple of its anticanonical divisor is Cartier.

\begin{thm}[\(=\) \Cref{P function theorem}]\noindent\phantomsection\label{thm:key theorem}
    Let \(X\) be a complete \(\QQ\)-Gorenstein spherical variety, then \[\widetilde{\wp}(X)\ge\gamma(X),\] with \(\widetilde{\wp}(X)<1\) only if \(X\) is isomorphic to a toric variety.
\end{thm}

Combining \Cref{P function conjecture intro} and \Cref{thm:key theorem} gives the proof of \Cref{thm:Generalised Mukai conjecture}, the generalised Mukai conjecture for \(\QQ\)-factorial spherical varieties.

In \Cref{sec:examples}, there is an example to illustrate the proof.

\subsection{The Mukai-type conjecture}

Finally, in \Cref{sec:Mukai type} we remark on the \emph{Mukai-type conjecture}, posed by Gongyo in 2023, which extends the goal of the Mukai conjecture by conjecturally characterising products of projective spaces among the smooth Fano varieties. Here, we answer a question of Gongyo concerning this conjecture, and provide a succinct proof of the Mukai-type conjecture in the case of spherical varieties.


\section*{Acknowledgements}
The authors thank Kento Fujita for helpful comments.


\section{Defining the \texorpdfstring{\(\widetilde{\wp}\)}{p-tilde} function}\noindent\phantomsection\label{sec:background}
In this section, we breifly recall some of the combinatorial description of spherical varieties, which extends the toric dictionary. Then we define the \(\widetilde{\wp}\) function mentioned in the introduction.
We refer to~\cite{Timashev} and the references therein for a general introduction to spherical varieties.

In this paper, \(G\) denotes a connected reductive algebraic group, \(B\subseteq G\) a Borel subgroup, and \(X\) a spherical \(G\)-variety.
One assigns to \(X\) the lattice \(\M\subseteq \mathfrak{X}(B)\) of \(B\)-weights \(\chi\) of its \(B\)-semi-invariant rational functions, i.e.\ all functions \(f_\chi \in \CC(X)\) such that \(b\cdot f_\chi(x) = \chi(b) f(x)\) for all \(x\in X\) and \(b\in B\).
Since \(X\) has an open \(B\)-orbit, each \(B\)-semi-invariant rational function \(f_\chi\) is uniquely determined by its weight \(\chi \in \M\) (up to multiples in \(\CC^*\)).
Furthermore, we define the set \(\Delta\) of \(B\)-invariant prime divisors in \(X\).
Like in the toric dictionary, one assigns to every divisor \(D \in \Delta\) the dual vector \(\rho(D) \in \N \coloneqq \Hom(\M,\ZZ)\) defined by \(\langle \rho(D), \chi\rangle \coloneqq \nu_D(f_\chi)\), where \(\langle \cdot, \cdot\rangle \colon \N \times \M \to \ZZ\) denotes the dual pairing.
We denote by \(\rk X\) the rank of the lattice \(\M\).

If \(X\) is a spherical \(G\)-variety, then its open \(G\)-orbit is a homogeneous space \(G/H\) for a closed subgroup \(H \subseteq G\).
This homogeneous space has the distinguishing property that it contains an open \(B\)-orbit, and any such homogeneous space is called \emph{spherical}.
Like in the toric dictionary, the \(G\)-invariant divisors of a \(G\)-equivariant embedding \(G/H \hookrightarrow X\) correspond to rays \(\QQ_{\ge0}\rho(D)\) in the dual vector space \(\N_\QQ \coloneqq \N \otimes_\ZZ \QQ\).

Unlike toric geometry, rays corresponding to \(G\)-invariant prime divisors can only lie within a distinguished cone, the so-called \emph{valuation cone}:
\[
	\V \coloneqq \cone\left\{ \rho(D) \mid \text{\(D\) is a \(G\)-invariant prime divisor of a spherical embedding \(G/H\hookrightarrow X\)}\right\}.
\]

Extending the toric dictionary, the \(G\)-equivariant open embeddings of a spherical homogeneous space \(G/H\) into normal irreducible \(G\)-varieties, called \emph{spherical embeddings} of \(G/H\), are combinatorially described using the \emph{Luna-Vust theory} (see~\cite{Knop2012}).
According to this theory, there is a bijection between so-called \emph{colored fans} and isomorphism classes of spherical embeddings of \(G/H\). However, this characterisation will not play an important role in this paper.

Let \(G/H \hookrightarrow X\) be a spherical embedding. 
There is a finite set of \(B\)-invariant divisors in \(X\), called \emph{colors} and denoted by \(\D\), arising as the closures of \(B\)-invariant divisors in the spherical homogeneous space \(G/H\).

Throughout, we may assume that \(X\hookleftarrow G/H\) is a spherical variety with \(G=G^\mathrm{ss}\times S\), where \(G^\mathrm{ss}\) is semi-simple and simply connected, and \(S\) is a torus. All spherical varieties arise as a \(G\)-spherical embedding of such a \(G\), and moreover, under this assumption all line bundles on \(X\) are \(G\)-linearisable (see~\cite[Proposition 2.4 and the subsequent remark]{KKLV89}).

In~\cite[Proposition~4.1]{BrionAnticanonical}, Brion has shown that there is a natural representative of the anticanonical class \(-K_X\) in terms of the \(B\)-invariant prime divisors:
\[
	-K_X \coloneqq \sum_{D \in \Delta} m_D D.
\]
Here, the \(m_D\) are positive integers satisfying \(m_D=1\) if \(D\) is \(G\)-invariant, and \(m_D\ge1\) when \(D\) is a color.
In the latter case, there is a combinatorial formula for the coefficients.

To state the approach in~\cite{GH15}, we require the following intersection of half-spaces in \(\M_{\QQ} \coloneqq \M \otimes_{\ZZ} \QQ\):
\[
	Q^* \coloneqq \bigcap_{D \in \Delta} \left\{ v \in \M_{\QQ} \mid \langle \rho(D), v \rangle \ge -m_D \right\}.
\]
Note that if \(X\) is complete, then \(Q^*\) is a polytope, and if \(X\) is \(\QQ\)-Gorenstein Fano, then \(Q^*\) is a translation of the moment polytope of the anticanonical divisor of \(X\). We remark that in~\cite{GorensteinFano}, the polytopes \(Q^*\) corresponding to \(\QQ\)-Gorenstein spherical Fano varieties have been combinatorially characterised, establishing a bijection between this class of rational polytopes, and the spherical embeddings \(G/H\hookrightarrow X\) which are \(\QQ\)-Gorenstein Fano.

We may now define the key object of this paper:

\begin{defin}\noindent\phantomsection\label{def: P}
	Let \(X\) be a spherical variety, then 
	\[
		\widetilde{\wp}(X) \coloneq \dim X-\rk X-\sup \left\{ \sum_{D\in\Delta}\left(m_D-1+\big\langle\rho(D),\vartheta\big\rangle\right)\,\big|\,\vartheta\in Q^*\cap\T \right\},
	\]
	where \(\T \coloneq -\V^\vee\) is the negative of the dual cone of \(\V\).
\end{defin}

\begin{rem}
This definition relates to the definition of \(\wp(X)\) in~\cite{GH15} in the following way: \(\widetilde{\wp}(X)=\dim X-\rk X-\wp(X)\).
\end{rem}

\section{The generalised Mukai conjecture for spherical varieties}\noindent\phantomsection\label{sec:mukai proof}
The goal of this section is to prove \Cref{thm:key theorem}. This provides a proof of \Cref{thm:Generalised Mukai conjecture}, the generalised Mukai conjecture for \(\QQ\)-factorial spherical varieties.
As mentioned in the introduction, the strategy is to interpret the function \(\widetilde{\wp}\) in terms of absolute complexities, reducing the proof of \Cref{thm:Generalised Mukai conjecture} to \Cref{complexity zero means toric}. 

\subsection{Interpreting the \texorpdfstring{\(\widetilde{\wp}\)}{p-bar} function}
Let \(X\) be a complete \(\QQ\)-Gorenstein spherical variety.
In this section, we use the notation of the previous introductory sections.

\begin{prop}\noindent\phantomsection\label{polytope linear system correspondence}
	The rational points of \(Q^*\) correspond to the effective \(B\)-invariant \(\QQ\)-divisors which are \(\QQ\)-linearly equivalent to \(-K_X\), i.e.\ divisors \(0\le E=\sum_{D \in \Delta} a_D D\) with \(a_D \in \QQ\), such that \(E \sim_{\QQ} -K_X\).
\end{prop}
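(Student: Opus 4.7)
\medskip

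The plan is to build an explicit bijection $\vartheta \longleftrightarrow E_\vartheta := -K_X + \divv(f_\vartheta)$, exploiting the defining property of the lattice $\M$ and of the map $\rho\colon\Delta\to\N$.

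First I would recall (and use without re-proof) the two key facts that power the whole statement. Because $B$ acts with an open orbit on $X$, one has $\CC(X)^B = \CC$, so a $B$-semi-invariant rational function is determined up to a scalar by its $B$-weight; write $f_\chi$ for one choice corresponding to $\chi\in\M$. Secondly, the definition of $\rho(D)$ gives exactly
\[
\divv(f_\chi) \;=\; \sum_{D\in\Delta} \langle\rho(D),\chi\rangle\, D,
\]
since a $B$-semi-invariant function has $B$-invariant divisor, hence is supported on $\Delta$. These two facts, together with Brion's formula $-K_X = \sum_{D\in\Delta} m_D D$, are what make the proposition essentially a direct computation.

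Next I would set up the forward map. Given a rational $\vartheta\in Q^*$, pick $r\in\ZZ_{>0}$ with $r\vartheta\in\M$, and define
\[
E_\vartheta \;:=\; -K_X + \tfrac{1}{r}\divv(f_{r\vartheta}) \;=\; \sum_{D\in\Delta}\bigl(m_D + \langle\rho(D),\vartheta\rangle\bigr)\,D.
\]
By construction $E_\vartheta$ is $B$-invariant (its components lie in $\Delta$) and $\QQ$-linearly equivalent to $-K_X$, and its coefficients are nonnegative precisely because of the half-space conditions defining $Q^*$. The class of $E_\vartheta$ is independent of the choice of $r$ and of the scalar in $f_{r\vartheta}$, because any such rescaling alters $\divv(f_{r\vartheta})$ by zero.

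For the inverse, suppose $E=\sum_{D\in\Delta}a_D D$ is an effective $B$-invariant $\QQ$-divisor with $E\sim_{\QQ}-K_X$. Then $E+K_X$ is a $B$-invariant principal $\QQ$-divisor. Choose $r$ so that $r(E+K_X)=\divv(g)$ for some $g\in\CC(X)^\times$. Since $\divv(g)$ is $B$-invariant, $b\cdot g/g$ is a $B$-invariant unit in $\CC(X)$, hence (by $\CC(X)^B=\CC$) a character of $B$; thus $g$ may be chosen $B$-semi-invariant, say $g=f_\chi$ with $\chi\in\M$. Setting $\vartheta:=\chi/r\in\M_\QQ$, the identity $\langle\rho(D),\vartheta\rangle = a_D - m_D \geq -m_D$ shows $\vartheta\in Q^*$, and $E_\vartheta = E$. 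Uniqueness of $\vartheta$ follows from the injectivity of $\rho\colon\Delta\to\N$ on the coefficient datum, or equivalently from $\CC(X)^B=\CC$.

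The only step I expect any friction on is the inverse direction's reduction from a $B$-invariant principal divisor to the divisor of a $B$-semi-invariant function; but this is a standard consequence of sphericity that I would invoke rather than reprove. Everything else is a direct translation between the inequalities cutting out $Q^*$ and the nonnegativity of the coefficients in $\sum(m_D + \langle\rho(D),\vartheta\rangle)D$.
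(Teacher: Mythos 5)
Your proof is correct and reaches the same explicit correspondence \(\vartheta \leftrightarrow -K_X + \tfrac{1}{r}\divv(f_{r\vartheta})\), but by a more elementary route than the paper. The paper routes through global sections: it uses the \(\QQ\)-Gorenstein hypothesis to make \(\OO_X(-mK_X)\) a line bundle, fixes compatible \(G\)-linearisations, and then associates to each rational \(\vartheta\) the divisor \(\tfrac{1}{r_x m}\divv(\delta^{r_x}\cdot f_\lambda)\), where \(\delta\) is a distinguished \(B\)-semi-invariant section with \(\divv(\delta)=-mK_X\). You instead work directly with divisors of \(B\)-semi-invariant rational functions, using only the identity \(\divv(f_\chi)=\sum_{D\in\Delta}\langle\rho(D),\chi\rangle D\) together with Brion's anticanonical formula; in particular your argument does not invoke the \(\QQ\)-Gorenstein hypothesis at all, and you spell out the inverse direction (rescaling a rational function cutting out a \(B\)-invariant principal divisor to a \(B\)-semi-invariant one), which the paper leaves implicit as a claim of uniqueness. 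One small imprecision at the end: the uniqueness of \(\vartheta\) follows from the injectivity of \(\chi\mapsto\divv(f_\chi)\) on \(\M\), which rests on completeness of \(X\) (a rational function on a complete normal irreducible variety with trivial divisor is constant), rather than directly on \(\CC(X)^B=\CC\) or on any injectivity of \(\rho\) as a map on \(\Delta\).
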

\begin{proof}
	Fix a basis \(\{\lambda_i\,|\,i=1,\dots,\rk X\}\) of \(\M\), and let \(f_{\lambda_i}\in{\CC(G/H)}^{(B)}\) be the rational function of \(B\)-weight \(\lambda_i\), which is unique up to scaling by constants.
    Since \(X\) is \(\QQ\)-Gorenstein, there is an integer \(m>0\) such that \(-mK_X\) is Cartier.
    Then by our assumption in \Cref{sec:background}, \(G=G^\mathrm{ss}\times S\) is factorial, and \(\OO_X(-mK_X)\) is \(G\)-linearisable.
    It follows that all powers of \(\OO_X(-mK_X)\), i.e.\ \(\OO_X(-rmK_X)\) for \(r \in \ZZ\), inherit a natural \(G\)-linearisation from \(\OO_X(-mK_X)\).
    Since \(\OO_X(-mK_X)\) is \(G\)-linearised, \(H^0(X,\OO_X(-mK_X))\) admits a \(G\)-module structure.
    Let \(\delta \in H^0(X,\OO_X(-mK_X))\) be such that \(\mathrm{div}(\delta) = -mK_X\) (such a section is uniquely defined up to multiplication by an invertible function).
    Since \(-mK_X\) is \(B\)-invariant, the section \(\delta\) is \(B\)-semi-invariant of weight \(m\kappa \in \mathfrak{X}(B)\).
    Let \(u,v \in \mathbb{Z}_{\ge0}\).
	Notice that our choice of \(G\)-linearisations guarantees that if \(s \in H^0(X,{\OO_X(-umK_X)}^{(B)})\) is a \(B\)-semi-invariant global section of weight \(\chi \in \mathfrak{X}(B)\), then \(s^v \in H^0(X,{\OO_X(-uvmK_X)}^{(B)})\) is also \(B\)-semi-invariant of weight \(v\chi \in \mathfrak{X}(B)\).
	
	Each integral point \(\lambda\coloneqq \sum_i{a_i}\lambda_i\in mQ^*\cap\M\) gives rise to the \(B\)-semi-invariant section \(s_{\lambda}\coloneqq \delta\cdot\prod_i f_{\lambda_i}^{a_i}\in {H^0(X,\OO_X(-mK_X))}_{\lambda+m\kappa}^{(B)}\) of weight \(\lambda + m \kappa\) (see~{\cite[Section~9]{GorensteinFano}} or {\cite[Section~17]{Timashev}}).
    For each rational point \(x\in mQ^*\cap\M_{\QQ}\), there is a non-unique \(r_x\in\ZZ_{\ge0}\) such that \(\lambda\coloneqq r_x x\in r_x m Q^* \cap \M\).
	Let \(f_{\lambda}\in{\CC(G/H)}^{(B)}\) have weight \(\lambda\).
    Then \(\delta^{r_x} \cdot f_\lambda\) corresponds to a \(B\)-semi-invariant global section of \(\OO_X(-r_x m K_X)\), to which we associate the \(\QQ\)-divisor \(\frac1{r_x m}\mathrm{div}(\delta^{r_x}\cdot f_\lambda)\sim_\QQ -K_X\).
    It is straightforward to show that this divisor is unique, in particular, that it does not depend on \(r_x\).\end{proof} 

\begin{rem}\noindent\phantomsection\label{Q divisor proposition}
    From the proof of \Cref{polytope linear system correspondence}, it follows that the effective \(B\)-invariant \(\QQ\)-divisors \(E\) with \(E\sim_\QQ-K_X\) are those of the form \(E=-K_X+\frac{1}{m}\divv(f_{m\vartheta})\), for a rational point \(\vartheta\in Q^*\) and \(m\in\ZZ_{>0}\) such that \(m\vartheta\in mQ^*\cap\M\) is integral.

\end{rem}

\begin{rem}\noindent\phantomsection\label{P function meaning}
As \(X\) is complete, \(Q^*\) is compact, so the supremum in \(\widetilde{\wp}\) is a maximum. Since \(\rk \Cl(X)=\#\Delta-\rk X\), we may rewrite \(\widetilde{\wp}(X)\) as follows:
    \[
        \widetilde{\wp}(X)=\dim X+\rk\Cl(X)-\max\left\{\sum_{D\in\Delta}\left(m_D+\langle\rho(D),\vartheta\rangle\right)\,\big|\,\vartheta\in Q^*\cap\T\right\}.
    \]

    The second term in the equation above chooses a rational point \(\vartheta\in Q^*\cap\T\) corresponding to the \(B\)-invariant principal \(\QQ\)-divisor \(E'\coloneq\frac{1}{m}\mathrm{div}(f_{m\vartheta})\) with \(m \in \ZZ_{>0}\) such that \(m\vartheta\in mQ^*\cap\T\cap\M\), which attains the maximum value of \(d(E')\).
    Indeed, from the description of the class group of a spherical variety, by fixing a basis \(\{\lambda_1,\dots,\lambda_r\}\) of \(\M\), for \(m\vartheta=\vartheta_1 \lambda_1 + \cdots + \vartheta_r \lambda_r\in\M\) with \(\vartheta_i \in \ZZ\), we have:
    \begin{align*}
        d\left(\frac{1}{m}\mathrm{div}(f_{m\vartheta})\right) &= \frac{1}{m}d \left( \mathrm{div}\left( f_1^{\vartheta_1} \cdots f_r^{\vartheta_r}\right)\right) = \frac{1}{m}d\left(\sum_{i=1}^r\vartheta_i\mathrm{div}(f_{\lambda_i})\right)\\
        &=\frac{1}{m}\sum_{i=1}^r \vartheta_i\sum_{D\in\Delta}\langle\rho(D),\lambda_i\rangle=\langle\sum_{D\in\Delta}\rho(D),\vartheta\rangle.
    \end{align*}

    So the second term computes the maximum value of \(d(E)\) where \(E=-K_X+E'\) is an effective \(B\)-invariant divisor \(E'\sim_\QQ-K_X\), where \(E'\) is as above. Therefore
    \[
        \widetilde{\wp}(X)=\dim X+\rk\Cl(X)-\max\left\{d\big(-K_X+\frac{1}{m}\divv(f_{m\vartheta})\big)\,\big|\,m\in\ZZ_{>0},\,\vartheta\in mQ^*\cap\T\cap\M\right\}.
    \]
    That is, \(\widetilde{\wp}(X)\) is the minimum absolute complexity of a \(B\)-invariant divisor \(E\) in the \(\QQ\)-linear system of \(-K_X\), subject to the constraint \(E=-K_X+\frac{1}{m}\divv(f_{m\vartheta})\) for some \(\vartheta\in mQ^*\cap\T\cap\M\) and \(m\in\ZZ_{>0}\).
\end{rem}

\begin{rem}\noindent\phantomsection\label{struture of divisors from Q}
    Restricting to elements \(\vartheta\in\T\) in the equations above, corresponds to choosing \(\QQ\)-divisors of the form \(E=-K_X+\frac{1}{m}\divv(f_{m\vartheta})\) with \(\mathrm{ord}_Y(f_{m\vartheta})\le 0\) for every \(G\)-invariant prime divisor \(Y\) lying \(G\)-equivariantly over \(G/H\).
    That is, for every \(G\)-equivariant birational morphism \(\pi \colon X' \to X\) such that \(Y\subseteq X'\) is a \(G\)-invariant prime divisor, we have \(\mathrm{ord}_Y(f_{m\vartheta})\le0\). In particular, each \(G\)-invariant prime divisor in \(E\) has coefficient in \([0,1]\).
\end{rem}

\subsection{Constructing a log canonical pair}\noindent\phantomsection\label{sec:constructing-lc-pair}
Let \(X\) be a complete \(\QQ\)-Gorenstein spherical variety.
Let \(D=-K_X+\frac{1}{m}\divv(f_{m\vartheta})\) be a divisor arising from \(\vartheta\in\M_{\QQ}\) with \(m\vartheta\in mQ^*\cap\T\cap \M\) for \(m\in\ZZ_{>0}\) in the sense of \Cref{Q divisor proposition}.
In this section, we replace \(D\) by some \(\widetilde{D}\sim_\QQ D\) so that \((X,\widetilde{D})\) is a log canonical pair.
For this, we use \Cref{SNC machine} to build a simple normal crossings divisor.
Recall that a divisor \(Y = \sum Y_i\) on \(X\) has \emph{simple normal crossings} (or \(Y\) is an \emph{snc divisor}) if each irreducible summand \(Y_i\) is smooth, and if \(Y\) is defined in a neighbourhood of any point by an equation in local analytic coordinates of the type \(z_1 \cdots z_k = 0\) for some \(k \le \dim(X)\).
A \(\QQ\)-divisor \(\sum a_i Y_i\) has \emph{simple normal crossing support} if \(\sum Y_i\) is an snc divisor.

\begin{lemma}[{\cite[Lemma~9.1.9]{Lazarsfeld}}]\noindent\phantomsection\label{SNC machine}
    Let $Y$ be a simple normal crossings divisor on a smooth variety \(X\), and let $|V|$ be a basepoint free linear system on \(X\).
    Then a general divisor \(A\in|V|\) makes \(Y+A\) a simple normal crossings divisor on \(X\).
\end{lemma}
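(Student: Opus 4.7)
The plan is a stratified Bertini argument. Write $Y = Y_1 + \cdots + Y_r$ as the sum of its smooth prime components. Because $Y$ is snc, every iterated intersection $Y_I := \bigcap_{i \in I} Y_i$, for $I \subseteq \{1, \ldots, r\}$, is a smooth subvariety of $X$ of pure codimension $|I|$ (with the convention $Y_\emptyset = X$). This is the piece of structure I would exploit.

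The key step is to apply Bertini to each stratum in turn. Since $|V|$ is basepoint-free on $X$, its restriction to each smooth stratum $Y_I$ is still basepoint-free on $Y_I$ (basepoint-freeness is trivially inherited by restriction to a subvariety), and Bertini's theorem produces a dense open $U_I \subseteq |V|$ such that for every $A \in U_I$ the scheme $A \cap Y_I$ is either empty or a smooth divisor in $Y_I$ of codimension one. There are only $2^r$ strata, so the intersection $U := \bigcap_I U_I$ is dense and open in $|V|$. Any $A \in U$ is simultaneously smooth on $X$ (the case $I = \emptyset$) and transverse to every stratum $Y_I$. That $A$ shares no component with any $Y_i$ is automatic: the divisors in $|V|$ containing $Y_i$ form a proper linear subspace of $|V|$, else $Y_i$ would be in the base locus.

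To conclude $Y + A$ is snc, fix $p \in \supp(Y + A)$ and set $I := \{i : p \in Y_i\}$. Choose local analytic coordinates $z_1, \ldots, z_n$ at $p$ in which $Y$ is cut out by $z_1 \cdots z_{|I|} = 0$; then $Y_I$ is locally $\{z_1 = \cdots = z_{|I|} = 0\}$. If $p \notin A$ there is nothing more to check, so assume $p \in A$ and let $f$ be a local equation of $A$. Transversality of $A$ to $Y_I$ forces $df|_{T_p Y_I} \neq 0$, so $df$ has a nonzero component in the span of $dz_{|I|+1}, \ldots, dz_n$. A linear change of coordinates lets us take $f = z_{|I|+1}$, exhibiting $Y + A$ locally as $z_1 \cdots z_{|I|+1} = 0$, which is snc.

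The only real obstacle is the packaging of the finitely many Bertini conditions into a single generic statement; the local verification above is essentially automatic. It is worth noting that securing transversality at the \emph{minimal} stratum $Y_I$ through $p$ is enough, because if $J \subsetneq I$ then $T_p Y_J \supseteq T_p Y_I$, so $df|_{T_p Y_I} \neq 0$ implies $df|_{T_p Y_J} \neq 0$ as well, and transversality propagates upwards to all larger strata through $p$.
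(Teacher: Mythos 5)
The paper does not give a proof of this lemma: it is cited verbatim from Lazarsfeld's \emph{Positivity in Algebraic Geometry II}, Lemma~9.1.9. Your argument is the standard one, and it is essentially the proof one finds in Lazarsfeld, so there is nothing to compare it against except the textbook. The proof is correct.

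Two small points of hygiene worth flagging. First, when you say ``Bertini's theorem produces a dense open $U_I \subseteq |V|$,'' what Bertini literally produces is a dense open in the restricted system $|V|_{Y_I}|$ on $Y_I$; you need the extra (easy, but worth a sentence) observation that the linear projection $|V| \dashrightarrow |V|_{Y_I}|$ is dominant, so the preimage of that dense open is again dense open in $|V|$. You also tacitly use that this restricted system is non-degenerate on each nonempty $Y_I$, which holds precisely because $|V|$ is basepoint-free (otherwise $Y_I$ would sit in the base locus). Second, ``a linear change of coordinates lets us take $f = z_{|I|+1}$'' is a slight abuse: what the condition $df|_{T_pY_I}\neq 0$ gives is that $z_1,\dots,z_{|I|},f$ form part of a local analytic coordinate system, which one completes via the implicit function theorem rather than a linear change. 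Neither affects the correctness of the argument.
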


\begin{prop}\noindent\phantomsection\label{gorenstein lc pair}
    There is a log canonical pair \((X,\widetilde{D})\) with \(\widetilde{D}\sim_\QQ D\) and \(d(\widetilde{D})=d(D)\).
\end{prop}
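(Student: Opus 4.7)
The plan is to construct $\widetilde{D}$ by keeping the $G$-invariant part of $D$ and splitting each color component into many generic $G$-translates with small coefficients, then establish log canonicity through a Bertini-type argument on a suitable log resolution.

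Writing $D=\sum_{E\in\Delta}a_E E$, \Cref{struture of divisors from Q} ensures $a_E\in[0,1]$ for each $G$-invariant prime divisor $E$, while for a color the coefficient $a_E$ may exceed $1$. For each color $E$ with $a_E>0$, I choose an integer $N_E\ge a_E$ and generic elements $g_{E,1},\dots,g_{E,N_E}\in G$. Since $G$ is connected and, under our standing assumption $G=G^{\mathrm{ss}}\times S$, every line bundle on $X$ is $G$-linearisable, each translate $g_{E,k}\cdot E$ is linearly equivalent to $E$. I then define
\[
    \widetilde{D} := \sum_{E\text{ $G$-invariant}} a_E E + \sum_{E\text{ color}} \frac{a_E}{N_E} \sum_{k=1}^{N_E} g_{E,k}\cdot E.
\]
The divisor $\widetilde{D}$ is effective, $\widetilde{D}\sim_\QQ D$, and $d(\widetilde{D})=d(D)$, since each color group contributes total coefficient $N_E\cdot (a_E/N_E)=a_E$ regardless of possible coincidences among the translates.

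For log canonicity, note $K_X+\widetilde{D}\sim_\QQ K_X+D\sim_\QQ 0$ is $\QQ$-Cartier, so $(X,\widetilde{D})$ is a log pair. I build a log resolution $\pi\colon X'\to X$ by starting from a $G$-equivariant resolution of singularities that makes the $G$-invariant part of $\widetilde{D}$ together with $\mathrm{Exc}(\pi)$ snc, then iteratively applying \Cref{SNC machine}: each color $E$ moves in the basepoint-free family $G\cdot E$ when restricted to the open orbit $G/H$, so a generic translate $g_{E,k}\cdot E$ can be added while preserving snc on a further blow-up along $G$-invariant subvarieties. On the resulting $\pi$, the support $\pi^{-1}(\supp \widetilde{D})\cup\mathrm{Exc}(\pi)$ is snc and every strict transform of a prime component of $\widetilde{D}$ has coefficient at most $1$ in $\pi^*\widetilde{D}$, yielding log discrepancy $\ge -1$ at all non-exceptional prime divisors.

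The main obstacle is bounding the log discrepancies at the exceptional divisors of $\pi$: since a $G$-translate $g\cdot E$ has the same multiplicity as $E$ along any $G$-invariant subvariety, the exceptional contributions of $\pi^*\widetilde{D}$ match those of $\pi^* D$, and no amount of averaging reduces them. One therefore must invoke the spherical structure directly --- for instance, the log canonicity of the full boundary pair $(X,\sum_{E\in\Delta}E)$ for any spherical variety $X$ (a consequence of Frobenius splitting) --- combined with a careful multiplicity comparison, to conclude $a_F\ge -1$ for each exceptional divisor $F$.
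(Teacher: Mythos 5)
Your construction of $\widetilde{D}$ differs essentially from the paper's, and the difference matters: you take generic $G$-translates of the colors \emph{on $X$ itself}, whereas the paper first passes to a $G$-equivariant \emph{toroidal} resolution $\pi\colon X'\to X$, where (by \cite[Proposition~2.6]{BrionBasepointFree}) the colors become basepoint free, and only then replaces them by general members of their linear systems. This is not a cosmetic choice. As you correctly observe in your last paragraph, a $G$-translate $g\cdot E$ has the same multiplicity as $E$ along every $G$-invariant subvariety, so the pullback $\pi^*\widetilde{D}$ built from translates contributes exactly as much to the exceptional coefficients as $\pi^*D$ does. By contrast, a general member $S'_{j,k}\in|D'_j|$ on the toroidal $X'$ avoids every $G$-invariant divisor on $X'$, so the correction term $g\in\CC(G/H)$ introduced by the modification has $\ord_{E_i}(g)=0$ on all exceptional divisors $E_i$. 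This vanishing is precisely what reduces the discrepancy computation to $c_i=-1-\tfrac1m\langle\rho(E_i),\vartheta\rangle\ge-1$ via $\vartheta\in\T$. With your $\widetilde{D}$ that cancellation is unavailable.

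The fix you sketch --- invoking log canonicity of $(X,\sum_{E\in\Delta}E)$ from Frobenius splitting and then doing a ``careful multiplicity comparison'' --- does not close the gap. Your $\widetilde{D}$ may have coefficient $a_E>1$ concentrated (in multiplicity) along $G$-invariant subvarieties that are centres of exceptional divisors, which log canonicity of the reduced boundary does not control: the pair $(X,\sum_E E)$ being log canonical says nothing about $(X,\sum_E a_E E)$ or about divisors built from translates with total coefficient $>1$ at such centres. Also note that what you need is not that the family $G\cdot E$ is basepoint free on the open orbit (it trivially is), but basepoint freeness along the boundary and the exceptional locus of a resolution --- and that fails on $X$ and on a general equivariant resolution; it is specific to the toroidal one. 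The missing idea, in short, is: replace the colors by general members of their now basepoint-free linear systems on the toroidal resolution, push forward, and then exploit that the auxiliary rational function $g$ has no zeros or poles along $G$-invariant divisors of $X'$.
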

\begin{proof}
    Take a \(G\)-equivariant toroidal resolution \(\pi \colon X' \to X\) given by intersecting each colored cone of the colored fan of \(X\) with the valuation cone, then discolouring every colored cone, and refining any resulting non-smooth cones into smooth cones via repeated stellar subdivision.
    Let \(\mathrm{Exc}(\pi)=\sum_i E_i\) be the exceptional locus.
    Then the \(G\)-invariant spherical boundary divisor \(\sum_i E_i+\sum_{X_j\in\Delta\setminus \D} \pi_{\ast}^{-1}(X_j)\) of \(X'\) has simple normal crossings (\(\pi_{\ast}^{-1}(\cdot)\) denotes the strict transform). 
    
    Using the notation from above, we may write \(D=-K_X+\frac{1}{m}\divv(f_{m\vartheta})=\sum_i a_i X_i+\sum_j b_j D_j\) (here, \(X_i \in \Delta \setminus \D\) runs through the \(G\)-invariant boundary divisors of \(X\) and \(D_j \in \D\) runs through the colors on \(X\)).
    Let \(X'_i = \pi_{\ast}^{-1}(X_i)\) and \(D_j' = \pi_{\ast}^{-1}(D_j)\) be the strict transforms of \(X_i \in \Delta \setminus \D\) and \(D_j \in \D\) respectively.
    Notice that the \(D'_j\) are the colors of \(X'\).
    Let us consider the strict transform of \(D\), i.e.\ \(D'\coloneq \pi_{\ast}^{-1}(D) = \sum_i a_i X_i' + \sum_j b_j D_j'\).
    We replace \(D'\) with the following \(\QQ\)-divisor: 
    \begin{equation}\noindent\phantomsection\label{eq:D-prime-tilde}
        \widetilde{D}'=\sum_i a_i X'_i+\sum_j \left(\left(\sum_{k=1}^{\lfloor b_j\rfloor} S'_{j,k}\right)+\{b_j\}S'_{j,\lceil b_j\rceil}\right),
    \end{equation}
    where \(S'_{j,k}\in |D'_j|\) is a general member of the linear system of the color \(D'_j\); \(\lfloor\cdot\rfloor,\lceil\cdot\rceil\colon \QQ \to \ZZ\) denote the floor and ceiling functions; and \(\{x\} = x - \lfloor x \rfloor\) is the fractional part of \(x \in \QQ\).

    We set \(\widetilde{D} \coloneqq \overline{\pi(\widetilde{D}'|_{X'\setminus\mathrm{Exc}(\pi)})}\subseteq X\), where the closure is taken inside \(X\).
    To be more precise, by this construction we mean
    \[
        \widetilde{D} = \sum_i a_i \overline{\pi\left(X'_i|_{X'_0}\right)} + \sum_j \left(\left(\sum_{k=1}^{\lfloor b_j \rfloor} \overline{\pi\left(S'_{j,k}|_{X'_0}\right)} \right) + \{b_j\} \overline{\pi\left(S'_{j,\lceil b_j\rceil}|_{X'_0}\right)}\right),
    \]
    where we have set \(X'_0 \coloneqq X'\setminus\mathrm{Exc}(\pi)\).
    Notice that \(\overline{\pi(X'_i|_{X'_0})} = X_i\) for all \(X_i \in \Delta \setminus \D\).
    
    We claim that \(\widetilde{D}\sim_\QQ D\).
    Indeed, for each \(j,k\) there exists \(g_{j,k} \in \CC(G/H)\) such that \(S'_{j,k} - D'_j = \divv(g_{j,k})\).
    Notice that not all coefficients of the divisors \(S'_{j,k}\) in \(\widetilde{D}'\) are integral.
    We can clear denominators by multiplying with an appropriate multiple \(m' \in \ZZ_{\ge0}\), that is
    \[
        m' \widetilde{D}'=\sum_i m'a_i X'_i+\sum_j \left(\left(\sum_{k=1}^{\lfloor b_j\rfloor} m'S'_{j,k}\right) + m' \{b_j\}S'_{j,\lceil b_j\rceil}\right),
    \]
    where now each coefficient is integral.
    Set \(m_{j,k} \coloneqq m'\) except for \(k=\lceil b_j \rceil\) where we set \(m_{j,\lceil b_j \rceil} \coloneqq m'\{b_j\}\).
    Let \(g \coloneqq \prod_{j,k} g_{j,k}^{m_{j,k}} \in \CC(G/H)\).
    By construction, it is clear that \(m'(\widetilde{D}' - D') = \divv(g)\).
    We get \((m'(\widetilde{D}' - D'))|_{X'\setminus\mathrm{Exc}(\pi)} = \divv(g)|_{X'\setminus\mathrm{Exc}(\pi)}\), and thus \(m'(\widetilde{D} - D) = \divv(g)\) on \(X\) as \(\pi(\mathrm{Exc}(\pi))\) has codimension at least \(2\) in \(X\).
    
    We claim that this modification provides a log canonical pair \((X,\widetilde{D})\) satisfying \(\widetilde{D}\sim_\QQ D\) and \(d(\widetilde{D})=d(D)\).
    Note that it only remains to verify the log canonical property.
    As \(X'\) is toroidal, by~\cite[Proposition~2.6]{BrionBasepointFree}, the colors \(D_j'\) of \(X'\) are basepoint free and \(\sum_i X_i' + \sum_i E_i\) is an snc divisor.
    Therefore by iteratively applying \Cref{SNC machine}, \(\pi_{\ast}^{-1}(\widetilde{D})+\sum_i E_i\) is a \(\QQ\)-divisor with simple normal crossing support, and thus \(\pi \colon X' \to X\) is a log resolution for the pair \((X, \widetilde{D})\).
    Moreover, by \Cref{struture of divisors from Q}, each component of \(\pi_{\ast}^{-1}(\widetilde{D})\) has coefficient in \((0,1]\).

    From the construction of \(\widetilde{D}\), there exists some \(m' \in \ZZ_{>0}\) and \(g \in \CC(G/H)\) such that 
    \[
        \widetilde{D} = D + \frac1{m'}\divv(g) = -K_X + \frac1m\divv(f_\vartheta) + \frac1{m'}\divv(g).
    \]
    Notice that \(g\) has no poles nor zeros along each \(G\)-invariant divisor in \(X'\). 
    Indeed, as the \(D_j'\) are basepoint free, none of the general members \(S'_{j,k} \in |D'_j|\) are contained in the finite set of \(G\)-invariant divisors in \(X'\).
    Therefore \(\divv(g) = m'(\widetilde{D} - D)\) has no \(G\)-invariant components.
    
    Finally, we bound the discrepancies of the exceptional locus, which are the coefficients \(c_i\) of the prime divisors \(E_i\subseteq\mathrm{Exc}(\pi)\) in \(K_{X'}=\pi^*(K_X+\widetilde{D})-\pi_{\ast}^{-1}(\widetilde{D})+\sum_i c_i E_i\).
    Multiplying this equation by \(mm'\) yields:
    \begin{gather*}
        mm'K_{X'}=\pi^*(m'\divv(f_{m\vartheta})+m\divv(g))-\pi_{\ast}^{-1}(mm'\widetilde{D})+mm'\sum_i c_i E_i.
    \end{gather*}
    Computing the coefficient of \(E_i\) in this equation and using that \(\vartheta\in\T\) gives:
    \begin{align*}
        -mm'&=m'\ord_{E_i}(f_{m\vartheta})+m\ord_{E_i}(g)+mm'c_i,\\
        c_i &= -1 - \frac{1}{m} \ord_{E_i}(f_{m\vartheta}),\\
        c_i &= -1 - \langle \rho(E_i), \vartheta \rangle \ge-1.
    \end{align*}
     
    Therefore \((X,\widetilde{D})\) is a log canonical pair.
\end{proof}

\begin{thm}\noindent\phantomsection\label{P function theorem}
	Let \(X\) be a complete \(\QQ\)-Gorenstein spherical variety, then \[\widetilde{\wp}(X)\ge\gamma(X),\] with \(\widetilde{\wp}(X)<1\) only if \(X\) is isomorphic to a toric variety.
\end{thm}

\begin{proof}
    By \Cref{P function meaning}, \(\widetilde{\wp}(X)=\dim X+\rk\Cl(X)-d(D)\) where \(D\) is some \(B\)-invariant \(\QQ\)-divisor with \(D\sim_\QQ-K_X\).
    Then by \Cref{gorenstein lc pair}, we may replace \(D\) with \(\widetilde{D}\), so that \((X,\widetilde{D})\) is a log canonical pair with \(\widetilde{D}\sim_\QQ -K_X\) and \(d(\widetilde{D})=d(D)\).
    Therefore 
    \[
        \widetilde{\wp}(X)=\dim X+\rk\Cl(X)-d(D)=\dim X+\rk\Cl(X)-d(\widetilde{D})=\gamma(X,\widetilde{D})\ge\gamma(X).
    \]
    Suppose that \(\widetilde{\wp}(X)<1\), then \(\gamma(X)<1\), so there is a log canonical pair \((X,\widetilde{D})\) with \(K_X+\widetilde{D}\sim_\QQ0\) such that \(\gamma(X,\widetilde{D})<1\).
    Therefore by \Cref{complexity zero means toric}, \(X\) is isomorphic to a toric variety.
\end{proof}

\subsection{Proof of the spherical generalised Mukai conjecture}\noindent\phantomsection\label{sec:proof-of-sph-mukai}
We have now all the tools to prove the generalised Mukai conjecture for \(\QQ\)-factorial spherical varieties.
\begin{proof}[Proof of \Cref{thm:Generalised Mukai conjecture}]
    By \Cref{P function conjecture intro} and \Cref{P function theorem},
    \begin{equation}\noindent\phantomsection\label{eq:seq-ineqs-Mukai}
		(\iota_X-1)\rho_X\le\dim X-\widetilde{\wp}(X)\le \dim X-\gamma(X)\le \dim X,
    \end{equation}
    where in the last inequality we have used \(\gamma(X)\ge0\) (see \Cref{complexity zero means toric}).

    Next, let us address the equality case in \Cref{thm:Generalised Mukai conjecture}.
    Notice that the sequence of inequalities in~\eqref{eq:seq-ineqs-Mukai} implies that
    \[
        (\iota_X-1)\rho_X=\dim X\quad \Rightarrow \quad \widetilde{\wp}(X) = 0 \quad \Rightarrow \quad \gamma(X) = 0.
    \]
    Therefore, by \Cref{P function theorem}, if \((\iota_X-1)\rho_X = \dim{X}\), then \(X\) is isomorphic to a toric variety.
	Finally, from the proof of \Cref{thm:Generalised Mukai conjecture} for \(\QQ\)-factorial toric varieties~\cite{Fujita2018TheGM}, there is the equality \((\iota_X-1)\rho_X=\dim X\) if and only if \(X\cong {(\PP^{\iota_X-1})}^{\rho_X}\).
\end{proof}

\subsection{Some remarks on the Mukai conjecture}

For a \(\QQ\)-factorial Fano variety \(X\), we have observed that the following inequality holds when \(X\) is spherical, which is a stronger bound than in \Cref{Mukai conjecture}. One might suspect that such an inequality holds for non-spherical Fano varieties.

\begin{prop}\noindent\phantomsection\label{reformulation}
    Let \(X\) be a \(\QQ\)-factorial spherical Fano variety, then
    \[
        (\iota_X-1)\rho_X+\gamma(X)\le \dim X.
    \]

\end{prop}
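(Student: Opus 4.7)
The plan is to observe that this proposition is essentially an immediate corollary of the two inequalities already established earlier in \Cref{sec:mukai proof}, so no new technology is required; the only thing to verify is that the hypotheses of the relevant ingredients are met. Since \(X\) is smooth, it is in particular \(\QQ\)-factorial (smoothness implies local factoriality, hence Cartier \(=\) Weil for divisors), and the canonical class is Cartier, so a smooth Fano variety is automatically a \(\QQ\)-factorial Gorenstein Fano variety. It is also complete and \(\QQ\)-Gorenstein. Therefore the hypotheses of both \Cref{P function conjecture intro} and \Cref{P function theorem} hold.

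The argument itself would then be a two-step chain. First I would apply \Cref{P function conjecture intro} to obtain
\[
    (\iota_X - 1)\rho_X \leq \dim X - \tilde{\wp}(X).
\]
Next I would apply \Cref{P function theorem}, which furnishes the inequality \(\tilde{\wp}(X) \geq \gamma_{\min}(X)\), and hence \(-\tilde{\wp}(X) \leq -\gamma_{\min}(X)\). Combining these gives
\[
    (\iota_X - 1)\rho_X \leq \dim X - \tilde{\wp}(X) \leq \dim X - \gamma_{\min}(X),
\]
which is exactly the desired bound. This is precisely the first two links of the chain of inequalities in~\eqref{eq:seq-ineqs-Mukai} that appears in the proof of \Cref{thm:Generalised Mukai conjecture}; here one simply stops short of the final bound \(\gamma_{\min}(X) \geq 0\), retaining the sharper intermediate quantity.

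There is no genuine obstacle to overcome, since both inputs are already in hand: the combinatorial upper bound of \((\iota_X-1)\rho_X\) by \(\dim X - \tilde{\wp}(X)\) is the content of~\cite[Corollary~4.4]{GH15}, and the comparison \(\tilde{\wp}(X) \geq \gamma_{\min}(X)\) is precisely what was extracted from \Cref{gorenstein lc pair} via the interpretation of \(\tilde{\wp}\) in terms of absolute complexities. The only subtlety worth flagging is that the smoothness hypothesis is not actually used in the argument beyond ensuring the \(\QQ\)-factorial Gorenstein Fano property, so in fact the same proof gives \((\iota_X-1)\rho_X \leq \dim X - \gamma_{\min}(X)\) for every \(\QQ\)-factorial Gorenstein spherical Fano variety; the statement is phrased in the smooth case because that is the natural setting of the Mukai conjecture it refines.
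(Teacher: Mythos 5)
Your proof is correct and is essentially identical to what the paper does: the proposition is exactly the first two links of the chain of inequalities~\eqref{eq:seq-ineqs-Mukai} in the proof of \Cref{thm:Generalised Mukai conjecture}, combining \Cref{P function conjecture intro} with \Cref{P function theorem} after noting that a smooth Fano variety is \(\QQ\)-factorial, Gorenstein, and complete. Your closing remark that the statement actually holds for every \(\QQ\)-factorial Gorenstein spherical Fano variety is also accurate and consistent with the paper's treatment.
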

This inequality implies \Cref{Mukai conjecture} for spherical varieties as we saw in the proof of \Cref{thm:Generalised Mukai conjecture} in \Cref{sec:proof-of-sph-mukai}.

 We thank Kento Fujita for suggesting the following example, which demonstrates that the inequality of \Cref{reformulation} does not hold in general.

\begin{example}\noindent\phantomsection\label{ex:delPezzo}
    Let \(X=\mathrm{Bl}_8\PP^2\) be a del Pezzo surface of degree 1, and let \((X,D)\) be a log canonical pair with \(D\coloneq\sum a_i D_i\sim_\QQ -K_X\), where the \(D_i\) denote the irreducible components of \(D\) and the \(a_i\) are rational numbers in \((0,1]\).
    Then
    \[
        1=(-K_X\cdot \sum a_i D_i)=\sum a_i(-K_X\cdot D_i)\geq \sum a_i=d(D),
    \]
    where we have used \((-K_X \cdot D_i) >0\), and thus \((-K_X\cdot D_i)\ge1\) by Kleiman's criterion of ampleness.
    From this, we get the following bound on \(\gamma(X)\):
    \[
        \gamma(X) = \inf\{ \dim(X) + \rho_X - d(D) \mid (X,D)\text{ log canonical pair with }D\sim_\QQ -K_X\} \ge 10.
    \]
    Hence \((\iota_X-1)\rho_X+\gamma(X)\ge10\), but \(\dim X =2\).
\end{example}

\begin{rem}
It may be interesting to consider this question with the \emph{complexity} instead of the absolute complexity (see \cite[Definition~1.1]{Geomcharoftoric}). Let \((X,D)\) be a log pair. A \emph{decomposition} of \((X,D)\) is \(\sum_{i=1}^k a_i S_i\le D\), where \(a_i\ge0\), and \(S_i\ge0\) are \(\ZZ\)-divisors. Let \(r\) be the rank of the vector space spanned by \(\{S_1,\dots,S_r\}\) in the Néron-Severi group.
The \emph{complexity} of this decomposition is \(\dim X+r-\sum_{i=1}^k a_i\), and the \emph{complexity} \(c(X,D)\) is the minimum complexity over all decompositions of \(D\).

We define:
\[
0\le c(X)\coloneq\min\{c(X,D)\mid(X,D)\text{ log Calabi-Yau pair}\}\le\gamma(X),
\]
which satisfies \(c(X)<1\) if and only if \(X\) is isomorphic to a toric variety. 

We may consider the following question.
\begin{question}\label{complexity mukai question} Let \(X\) be a smooth Fano variety, then does the following hold?
\[
(\iota_X-1)\rho_X+c(X)\le\dim X.
\]

\end{question}
It is clear that \Cref{complexity mukai question} implies the generalised Mukai conjecture. Moreover, it is easy to verify \Cref{complexity mukai question} in some special cases. In particular, for \(\rho_X=1\), for \(\iota_X=1\), for \(\dim X\le3\), and for any \(X\) such that \(\mathrm{Nef}(X)\subset N^1(X)\) is a smooth cone with respect to \(\Pic(X)\), provided that the Ambro-Kawamata effective non-vanishing conjecture holds for \(X\) (see \cite{Nonvanishing} and Proposition 4.4 within for details on this conjecture.)
\end{rem}

\begin{rem} One might ask if \Cref{thm:Generalised Mukai conjecture} holds when \(X\) is not \(\QQ\)-factorial. In this setting \(\Pic(X)_\QQ\neq\Cl(X)_\QQ\), and this question should be posed with \(\rho_X\coloneq\rk\Pic(X)\). Indeed, let \(X\) be the non-\(\QQ\)-factorial toric Fano threefold given by the face fan of the reflexive polytope in Figure~\ref{fig:non-Q-factorial}.

\begin{figure}[!ht]
\tdplotsetmaincoords{55}{9}
\begin{tikzpicture}[tdplot_main_coords, scale=1.2]

\coordinate (O) at (0,0,0);
\coordinate (A) at (-1,-1,-1);
\coordinate (B) at (0,-1,-1);
\coordinate (C) at (1,0,-1);
\coordinate (D) at (1,1,-1);
\coordinate (E) at (0,1,-1);
\coordinate (F) at (-1,0,-1);
\coordinate (G) at (0,0,1);
\coordinate (label) at (-1.3,1,-1.16);
\coordinate (label2) at (-1.3,1,-1.05);

\fill (0,0,0) circle (2pt);

\draw[thick]  (A) -- (B);
\draw[thick]  (B) -- (C);
\draw[thick]  (C) -- (D);
\draw[thick,dashed]  (D) -- (E);
\draw[thick,dashed]  (E) -- (F);
\draw[thick]  (A) -- (F);

\draw[thick] (G) -- (A);
\draw[thick] (G) -- (B);
\draw[thick] (G) -- (C);
\draw[thick] (G) -- (D);
\draw[thick,dashed] (G) -- (E);
\draw[thick] (G) -- (F);
\draw[thin] (E) -- (label);
\draw[thin] (label) -- (label2);

\node[anchor=north east] at (A) {\scriptsize \((-1,-1,-1)\)};
\node[anchor=north] at (B) {\scriptsize \((0,-1,-1)\)};
\node[anchor=west] at (C) {\scriptsize \((1,0,-1)\)};
\node[anchor=south west] at (D) {\scriptsize \((1,1,-1)\)};
\node[anchor=south] at (label2) {\scriptsize \((0,1,-1)\)};
\node[anchor=east] at (F) {\scriptsize \((-1,0,-1)\)};
\node[anchor=south] at (G) {\scriptsize \((0,0,1)\)};

\end{tikzpicture}
\caption{Reflexive polytope yielding a non-\(\QQ\)-factorial toric Fano threefold.\label{fig:non-Q-factorial}}
\end{figure}

Using the toric dictionary one can compute \(\rk\Cl(X)=4\), \(\rho_X=1\), and \(\iota_X=2\). So
\[
(\iota_X-1)\rk\Cl(X)=4>\dim X,\quad \text{yet}\quad (\iota_X-1)\rho_X=1<\dim X.
\]
Moreover, in \cite{Fujita2018TheGM} (which takes \(\rho_X=\rk\Pic(X)\)), \Cref{thm:Generalised Mukai conjecture} is shown for \(\QQ\)-Gorenstein toric Fano varieties.
\end{rem}


\section{An example}\noindent\phantomsection\label{sec:examples}In this section we illustrate the paper with an example.

\begin{example}
	 We consider a classical spherical variety: the variety of smooth conics \(G/H=\mathrm{SL}_3/Z(\mathrm{SL}_3)\mathrm{SO}_3\) (cf.~\cite[Table~A]{Wasserman}).
	We have \(\M=\ZZ(2\alpha_1)\oplus\ZZ(2\alpha_2)\), with two colors \(D_1\) and \(D_2\) of type 2a, with spherical roots \(\Sigma=\{2\alpha_1, 2\alpha_2\}\), and dual vectors \(\rho(D_1)=(2,-1)\), \(\rho(D_2)=(-1,2)\in\N\).
	The simple embedding \(X\) with colored cone \((\mathrm{cone}((-1,0),(2,-1)),\{D_1\})\) is the parameter space \(\PP^5\) of all conics, with a unique \(G\)-invariant divisor \(X_1\) parametrising the singular conics, and \(\Delta=\{X_1,D_1,D_2\}\). 
	The polytope \(Q^*\) is the triangle \(\mathrm{conv}((-1,-1),(1,0),(1,3))\), and \(\mathcal{T}=\mathrm{cone}(\Sigma)\) is the positive orthant of \(\M_\QQ\), see Figure~\ref{conics picture}.

	\begin{figure}[ht!]
		\centering
		\scalebox{1}{
		\begin{tikzpicture}
			\node[] at (0,2.5) {$\mathcal{N}_\QQ$};
			\draw[->] (-1.6,0) -- (1.6,0);
			\draw[->] (0,-1.6) -- (0,1.6);
			\draw[-] (0,0) -- (-1/2,2/2);
			\draw[-] (0,0) -- (2/2,-1/2);
			\node[] at (0,1.8) {$\frac{1}{2}\varpi_1^\vee$};
			\node[] at (2,0) {$\frac{1}{2}\varpi_2^\vee$};
			\fill[black] (-1/2,2/2) circle (1.5pt) node[label={[label distance=-0.1cm]}]{};
			\node[] at (-1.05,1) {$\rho(D_2)$};
			\node[] at (1,-0.8) {$\rho(D_1)$};

			\fill[black] (2/2,-1/2) circle (1.5pt) node[label={[label distance=-0.8cm]}]{};
			\coordinate (a) at (-0.007,-0.007);
			\coordinate (b) at (-1.5,-0.007);
			\coordinate (c) at (-0.007,-1.5);

			\fill[fill=gray!50!white] (a) -- (b) -- (c)  -- cycle;
			\node[] at (-0.43,-0.43) {$\V$};
		\end{tikzpicture}
		\quad \begin{tikzpicture}
			\coordinate (a) at (0,0);
			\coordinate (b) at (-1.5,0);
			\coordinate (c) at (1.5,-1.5/2);
			\fill[fill=gray!50!white] (a) -- (b) -- (c);

			\node[] at (0,2.5) {$\PP^5\hookleftarrow G/H$};
			\draw[->] (-1.6,0) -- (1.6,0);
			\draw[->] (0,-1.6) -- (0,1.6);
			\draw[-] (0,0) -- (1.5,-1.5/2);
			\fill[black] (2/2,-1/2) circle (1.5pt) node[label={[label distance=-0.8cm]}]{};
			\node[] at (0,1.9) {};
			\node[] at (2.1,0) {};
		\quad\quad \end{tikzpicture}

		\begin{tikzpicture}
			\coordinate (a) at (-0.01,-0.01);
			\coordinate (b) at (1/1.5-0.004,0-0.01);
			\coordinate (c) at (1/1.5+0.007,3/1.5-0.1);
			\coordinate (d) at (0-0.01,3/1.5-0.1);
			\fill[fill=gray!50!white] (a) -- (b) -- (c) -- (d) ;
            \draw[->] (-1.6,0) -- (1.6,0);
			\draw[->] (0,-1.6) -- (0,3/1.5);
			  \path[draw] (-1/1.7,-1/1.7) node[left] (5) {$(-1,-1)$}
					-- (1/1.7, 3/1.7) node[right] (2) {$(1,3)$}
					-- (1/1.7, 0) node[above right] (4) {$(1,0)$}
					-- cycle;
					\node[] at (0.1,2.5) {$Q^*\subset\M_\QQ$};
		\end{tikzpicture}}
		\caption{\centering The valuation cone and colors of \(G/H\) in \(\N_\QQ\); the colored fan of \(\PP^5\hookleftarrow G/H\); and the polytope \(Q^*\) with \(\T\) shaded.\label{conics picture}}
	\end{figure}
	Using~\cite[Theorem~4.2]{BrionAnticanonical}, \(-K_X=X_1+D_1+D_2\) and \(d(-K_X)=\sum_{D\in\Delta}m_D=3\).
    To illustrate \Cref{P function meaning}, the relations in \(\Pic(X)\) are \([0]=[\mathrm{div}(f_{2\alpha_1})]=[2D_1-D_2-X_1]\) and \([0]=[\divv(f_{2\alpha_2})]=[2D_2-D_1]\), so for \(\vartheta=x(2\alpha_1)+y(2\alpha_2)\) we get:
    \[
        d(x\mathrm{div}(f_{2\alpha_1})+y\mathrm{div}(f_{2\alpha_2}))=d(D_1(2x-y)+D_2(2y-x)+X_1(-x))=y=\langle\sum_{D\in \Delta}\rho(D),\vartheta\rangle.
	\]
   We compute
    \begin{align*}
        \widetilde{\wp}(X)&=\max\left\{\dim X+\rho_X-\sum_{D\in\Delta}m_D-y\,\big|\,(x,y)\in Q^*\cap\T\right\},\\
        &=\max\left\{3+y\,|\,(x,y)\in Q^*\cap\T\right\},\\
        &=0.
    \end{align*}
    We observe \(\widetilde{\wp}(X)<1\), detecting that \(X\) is isomorphic to a toric variety.
    The maximum of the linear program is achieved at the vertex \((1,3)\) of \(Q^*\) corresponding to the divisor \(6D_2\), where \(D_2\) is a \(B\)-invariant hyperplane in \(\PP^5\).  The log canonical pair of \Cref{gorenstein lc pair} is \((X,H_1+\dots+H_6)\), where the \(H_i\in|D_2|\) are general hyperplanes in \(\PP^5\). 
\end{example}

\section{A note on the Mukai-type conjecture}\label{sec:Mukai type}

In 2023, extending the goal of the Mukai conjecture, Gongyo posed \Cref{Mukai-type conjecture}, which conjecturally characterises products of projective spaces of not necessarily the same dimension among smooth Fano varieties. The two results of this section are the answer to a question of Gongyo on this conjecture, and to show that while \Cref{Mukai-type conjecture} does not imply the Mukai conjecture, by a simple argument, \Cref{Mukai-type conjecture} admits the same lower bound as the generalised Mukai conjecture in the setting of spherical varieties.

Very recently, \Cref{Mukai-type conjecture} was proved in full generality \cite{enwright2025characterizationproductsprojectivespaces}, for which the aforementioned bound is not shown to hold in general. 


\begin{defin}\label{total index}
    Let \(X\) be a Fano variety. Let \(\mathbb{K}\in\{\ZZ,\QQ\}\). The \(\mathbb{K}\)-\emph{total index} \(\tau_X(\mathbb{K})\) of \(X\) is
    \[
        \tau_{X}(\mathbb{K})\coloneq\sup\left\{\sum_{i=1}^ka_i\mid \sum_{i=1}^ka_iS_i=-K_X,\,S_i>0\text{ is a nef }\ZZ\text{-divisor},\,a_i\in\mathbb{K}_{>0}\right\}.
    \]
\end{defin}

\begin{conjecture}[{\cite[Conjecture~1.2]{gongyo}}]\label{Mukai-type conjecture} Let \(X\) be a smooth Fano variety, then
\[
\dim X+\rho_X-\tau_X(\mathbb{K})\ge0,
\]
with equality if and only if \(X\cong\PP^{n_1}\times\dots\times\PP^{n_{\rho_X}}\).
\end{conjecture}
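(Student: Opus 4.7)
The plan is to reduce the Mukai-type conjecture to the absolute complexity framework of Sections~\ref{sec:background} and~\ref{sec:mukai proof}, following Gongyo's conditional reduction to effective non-vanishing. Concretely, if for any nef partition \(-K_X=\sum_i a_i D_i\) realising \(\tau_X\) one can construct a log canonical pair \((X,D)\) with \(D\sim_\QQ -K_X\) and \(d(D)\ge\sum_i a_i\), then \Cref{complexity zero means toric} yields \(\dim X+\rho_X-\tau_X\ge \dim X+\rho_X-d(D)=\gamma(X,D)\ge 0\), and saturating the Mukai-type bound forces \(\gamma(X,D)=0\), hence toricity plus a combinatorial finish.

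For the inequality I would fix a maximizing nef partition \(-K_X=\sum_i a_i D_i\) with \(a_i\in\ZZ_{>0}\) and each \(D_i\) a nef nonzero \(\ZZ\)-divisor. The first step is to pass from each \(D_i\) to an effective representative \(E_i\in|D_i|\), which requires the effective non-vanishing conjecture for nef divisors on Fano varieties. Setting \(E=\sum_i a_i E_i\) gives \(E\sim -K_X\) with \(d(E)\ge\sum_i a_i\), where strict inequality may occur if some \(E_i\) is reducible or non-reduced. The second step is to replace \(E\) by \(\widetilde E\sim_\QQ E\) with \((X,\widetilde E)\) log canonical and \(d(\widetilde E)\ge d(E)\); the natural template is the SNC perturbation of \Cref{gorenstein lc pair}, applying \Cref{SNC machine} to substitute each \(E_i\) (respectively the fractional part corresponding to \(\{a_i\}\)) by sums of general members. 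Once \((X,\widetilde E)\) is in hand, \Cref{complexity zero means toric} delivers \(\gamma(X,\widetilde E)\ge 0\), i.e.\ \(\sum_i a_i\le\dim X+\rho_X\).

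For the equality case, if \(\tau_X=\dim X+\rho_X\) then the chain above collapses and \(\gamma(X,\widetilde E)=0<1\), so \Cref{complexity zero means toric} forces \(X\) to be isomorphic to a toric variety. The remaining task is purely combinatorial: on a smooth toric Fano, a nef partition of \(-K_X\) attaining the upper bound \(\dim X+\rho_X\) must decompose the primitive ray generators of the fan into \(\rho_X\) groups, each spanning a simplex fan, so that \(X\) splits as a product \(\PP^{d_1}\times\cdots\times\PP^{d_{\rho_X}}\). This step runs parallel to Casagrande's treatment of the toric Mukai conjecture in~\cite{C06}.

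The main obstacle is that the argument rests on two deep open inputs outside of very restricted classes: \emph{effective non-vanishing} to produce \(E_i\in|D_i|\), and enough \emph{base-point freeness} of \(|D_i|\) to run the SNC construction without losing coefficient mass (bare effectivity does not guarantee generic transversality, and reducible members can destroy the coefficient-preservation that is essential to the equality analysis). In the spherical setting of this paper both inputs are supplied uniformly by Brion's theorem~\cite{BrionBasepointFree}: every nef divisor on a complete spherical variety is base-point free, so \Cref{SNC machine} applies directly to general members of \(|D_i|\) and the construction of \Cref{sec:constructing-lc-pair} goes through verbatim, giving an unconditional proof of the Mukai-type conjecture for spherical varieties. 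For arbitrary smooth Fano \(X\) one would need either to establish effective non-vanishing for nef divisors on Fano varieties (together with a sufficient base-point freeness input), or to devise a fundamentally different route from a nef decomposition of \(-K_X\) to a log canonical boundary of maximal coefficient sum.
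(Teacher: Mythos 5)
Your route is genuinely different from the paper's, and as written it does not prove the statement (the paper records it as a conjecture and proves it only for spherical varieties, so the relevant comparison is with that proof). For the inequality, the paper never chooses a maximising nef partition and needs neither effective non-vanishing nor base-point freeness of the nef pieces: it simply observes that \(\tau(-K_X)\le d(-K_X)\) for Brion's \(B\)-stable representative \(-K_X=\sum_{D\in\Delta}m_D D\), applies \Cref{gorenstein lc pair} to this divisor to produce a log canonical \(\widetilde D\sim_\QQ -K_X\) with \(d(\widetilde D)=d(-K_X)\), and concludes \(\dim X+\rho_X-\tau_X\ge\gamma(X,\widetilde D)\ge0\) from \Cref{complexity zero means toric}. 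Your version of the inequality in the spherical case (take general members of the linear systems \(|D_i|\) and spread the integer multiplicities over distinct members so that all coefficients are at most \(1\)) can be made to work on a smooth spherical Fano, since nef Cartier divisors on complete spherical varieties are globally generated; but it is heavier than necessary, and for a general smooth Fano \(X\) it is exactly Gongyo's conditional reduction, contingent on effective non-vanishing, so it re-derives the implication already in \cite{gongyo} rather than proving the conjecture as stated.

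The genuine gap is the equality case. Even granting toricity from \(\gamma(X,\widetilde E)=0\), your ``purely combinatorial'' finish --- that a nef partition attaining \(\dim X+\rho_X\) forces the primitive ray generators to split into \(\rho_X\) groups each spanning a simplex fan --- is asserted, not proved, and it is not parallel to Casagrande's argument in \cite{C06}, which concerns the pseudo-index and primitive collections rather than nef partitions. Note also that the toric structure supplied by \Cref{complexity zero means toric} is a priori unrelated to your chosen partition, so the maximal nef partition must first be converted into a torus-invariant statement before any fan analysis can begin. The paper does precisely this: \(\tau_X=\dim X+\rho_X\) forces every torus-invariant prime divisor appearing in the standard toric anticanonical divisor to be nef, hence (as these divisors generate the class group) every divisor on \(X\) is nef, and then \cite[Proposition~5.3]{fujino2008smoothprojectivetoricvarieties} identifies \(X\) with \(\PP^{d_1}\times\cdots\times\PP^{d_{\rho_X}}\). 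Without this step, or an actual proof of your fan-splitting claim, the characterisation of the equality case is missing.
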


In \Cref{Mukai-type proof}, we give a succinct proof of \Cref{Mukai-type conjecture} for spherical Fano varieties. Firstly, we answer \Cref{question} asked by Gongyo, comparing \Cref{Mukai-type conjecture} with \(\ZZ\)-coefficients to \Cref{Mukai-type conjecture} with \(\QQ\)-coefficients.

\begin{question}\label{question} Let \(X\) be a smooth toric Fano variety. Do we have \(\tau_{X}(\ZZ)=\tau_X(\QQ)\)?

\end{question}

The following example shows the answer to \Cref{question} is negative. 

\begin{example} Let \(\pi:X\to\PP^4\) be the Voskresenskij-Klyachko fourfold (also called the del Pezzo variety \(V^4\)). It is the blowup of \(\mathbb{P}^4\) in five general points \(p_1,\dots,p_5\), followed by flipping the strict transform of each of the ten lines through the pairs of points \(p_ip_j\). \(X\) is a smooth toric Fano variety whose fan \(\Sigma\) has rays
\[\Sigma^{(1)}=
\begin{pmatrix}
1&-1&0&0&0&0&0&0&1&-1\\
0&0&1&-1&0&0&0&0&1&-1\\
0&0&0&0&1&-1&0&0&1&-1\\
0&0&0&0&0&0&1&-1&1&-1
\end{pmatrix}.
\]
The toric description of \(X\) makes it easy to compute the nef cone (for example, using \texttt{Macaulay2}). We use the basis \(\Pic(X)=\ZZ\pi^*(H)\oplus\ZZ E_1\oplus\dots\oplus\ZZ E_5\), where \(E_1,\dots,E_5\) are the exceptional loci, and \(H\) is a hyperplane on \(\PP^4\). In this basis, the rays of the nef cone are spanned by the following divisor classes:
\[\mathrm{Nef}(X)^{(1)}=\left\{
\begin{array}{lc}
\pi^*(2H)-\sum_{i=1}^5E_i,\\
\pi^*(2H)-\sum_{i=1}^5E_i-E_\ell\quad(\ell=1,\dots,5),\\
\pi^*(3H)-2\sum_{i=1}^5E_i,\\
\pi^*(3H)-2\sum_{i=1}^5E_i+E_\ell\quad(\ell=1,\dots,5)
\end{array}\right\}.
\]
Clearly the parts \(\{S_i\}\) of any \(\ZZ\)-nef-partition \(\sum a_i S_i=-K_X\) satisfy
\[
S_i\in P_X\coloneq\mathrm{Nef}(X)\cap(-K_X-\mathrm{Nef}(X))\cap\Pic(X)\setminus 0.
\]
In this case, \(P_X=\mathrm{Nef}(X)^{(1)}\cup\{-K_X=\pi^*(5H)-3\sum_{i=1}^5E_i\}\).

Observe that three or more classes in \(P_X\) do not sum to the class of \(-K_X\), and there are six \(\ZZ\)-nef-partitions of \(-K_X\) into two elements of \(P_X\). Therefore \(\tau_X=2\). However, with rational coefficients we can do better:
\begin{align*}
-K_X=\frac{1}{2}\sum_{\ell=1}^5\left(\pi^*(2H)-\sum_{i=1}^5E_i-E_\ell\right).
\end{align*}
So \(\tau_X(\QQ)\ge5/2>\tau_X(\ZZ)\).
\end{example}

\begin{rem} It is easy to see that the supremum in \Cref{total index} is a maximum, by considering the Hilbert basis of \(\Pic(X)\cap\mathrm{Nef}(X)\). Similarly, using this we may compute the total index explicitly in examples.

\end{rem}

\begin{rem}\label{Mukai-type proof} Let \(X\) be a smooth spherical Fano variety, now here we show \Cref{Mukai-type conjecture} holds for \(X\) by showing the following lower bound \(\dim X+\rho_X-\tau_X(\QQ)\ge\gamma(X)\ge0\), which coincides with the lower bound \(\dim X+\rho_X-\iota_X\rho_X\ge\gamma(X)\) for the generalised Mukai conjecture for spherical Fano varieties, obtained in \Cref{thm:key theorem}.

Indeed, let \(X\) be a smooth spherical Fano variety, and let \(\sum_{i=1}^k a_i S_i=-K_X\) with \(a_i\in\QQ_{\ge0}\) and \(S_i\) nef \(\ZZ\)-divisors, be a nef partition for which the value of \(\tau_X(\QQ)\) is attained. Let \(S_{i,j}'\in|S_i|\) be general members of the linear system. Then \(F=\sum_{i=1}^k(\sum_{i=1}^{\lfloor a_i\rfloor}S_{i,j}'+\{a_i\}S_{i,\lceil a_i\rceil}')\) is another nef partition with the same coefficient sum. Since on a complete spherical variety, nef and basepoint free Cartier divisors coincide \cite[Corollary~3.2.11]{PerrinSpherical}, we have \((X,F)\) is a log canonical pair. Therefore \(\dim X+\rho_X-\tau_X(\QQ)\) is the complexity of the log Calabi-Yau pair \((X,F)\), so by \Cref{complexity zero means toric}, this is a non-negative quantity. Equality \(\dim X+\rho_X-\tau_X(\QQ)=0\) holds if and only if \(X\) is a toric variety whose torus invariant prime divisors are the irreducible components of \(F\), and in \(F\) each component has coefficient one, so the classes of the components of \(F\) span the effective cone of \(X\), hence \(\mathrm{Nef}(X)=\mathrm{Eff}(X)\). Now by \cite[Proposition~5.3]{fujino2008smoothprojectivetoricvarieties}, \(X\cong\PP^{n_1}\times\dots\PP^{n_{\rho_X}}\).

\end{rem}

\printbibliography{}

\end{document}